\documentclass[12pt]{amsart}
\usepackage{bbm}  

\usepackage[numbers,sort&compress]{natbib}  

\usepackage[top=1in, bottom=1in, left=1in, right=1in]{geometry}
\usepackage{times}

\usepackage{amssymb,amsmath,amsthm}
\usepackage{mathrsfs} 
\usepackage{enumitem}
\usepackage[hyphens]{url}
\usepackage{dsfont}
\usepackage{graphicx,tikz}

\usepackage[colorlinks=true, linkcolor=blue, citecolor=blue, urlcolor=purple]{hyperref}   

\usepackage{graphicx}
\usepackage{color}

\numberwithin{equation}{section}

\theoremstyle{plain}
\newtheorem{thm}{Theorem}
\newtheorem{lem}{Lemma}

\newtheorem{cor}{Corollary}

\theoremstyle{remark}
\newtheorem{remark}{Remark}

\theoremstyle{definition}

\theoremstyle{remark}

\newcommand{\bs}\boldsymbol{}

\renewcommand{\phi}{\varphi}

\renewcommand{\mod}[1]{\,({\rm mod}\,#1)}

\definecolor{red}{rgb}{1,0,0}
\definecolor{orange}{rgb}{0.7,0.3,0}
\definecolor{blue}{rgb}{.2,.6,.75}
\definecolor{green}{rgb}{.4,.7,.4}

\begin{document}
\title[Some Omega results for Dirichlet $L$-functions]
        {Some Omega results for Dirichlet $L$-functions}

\author{Qiyu Yang$^{1}$}

\author{Shengbo Zhao$^{2,\dag}$}

\thanks
{
$\dag$ Corresponding author.\\
$^{1}$ School of Mathematics and Statistics, Henan Normal University, Xinxiang, 453007, People's Republic of China, E-mail address: qyyang.must@gmail.com.\\
$^{2}$ School of Mathematical Sciences,
Key Laboratory of Intelligent Computing and Applications(Ministry of Education),
Tongji University, Shanghai 200092, China, E-mail address: shengbozhao@hotmail.com.
}

\keywords{Large value, Dirichlet $L$-functions, Resonance method}

\subjclass[2010]{11M06,  11M26}

\date{\today}

\begin{abstract}
Motivated by the first author's earlier work in 2024, we use the resonance method to establish some Omega results for Dirichlet $L$-functions, extending the previous results.
\end{abstract}


\maketitle

\section{Introduction} \label{sec:intr}
The study of large values of Dirichlet $L$-functions is a central topic in analytic number theory. The importance of these problems has been understood since the work of Littlewood, owing to their close connections with classical problems such as character sums, quadratic non-residues, and class numbers.

The Riemann zeta function is the simplest example of Dirichlet $L$-functions, and estimates for its order are closely related to the distribution of prime numbers. Using a method based on Diophantine approximation, Montgomery \cite{Montgomery1977} proved that
for any fixed $1/2<\sigma<1$,
\begin{equation}\label{Montgomery-bound}
\begin{aligned}
\underset{T^{(\sigma-1/2)/3}\leq t\leq T}{\max}\log|\zeta(\sigma+it)|
\geq\frac{1}{20}(\sigma-1/2)^{1/2}\frac{(\log T)^{1-\sigma}}{(\log\log T)^{\sigma}}.
\end{aligned}
\end{equation}
Regarding the upper bound of $|\log \zeta(\sigma+it)|$, Montgomery \cite{Montgomery1977} conjectured that
\[
|\log\zeta(\sigma+it)|\ll_{\sigma}\frac{(\log t)^{1-\sigma}}{(\log\log t)^{\sigma}}
\]
for $1/2<\sigma<1$. Therefore, the lower bound in \eqref{Montgomery-bound} should be optimal up to the constant. For further discussion of this problem, see \cite{Aist2016,Bondarenko2018,de la2019,Dong2022,Jaskari2024,Sound2008,Sound2021,Yangdaodao2023,Yangdaodao2024,Yang2024,Yang2026}.

In \cite{Aist2019}, Aistleitner {\it et al.} proved that, for all sufficiently large $q$, there exists a non-principal character $\chi \pmod q$ and a constant $C(\sigma)$ such that
\begin{equation}\label{Aist2019}
\begin{aligned}
\log|L(\sigma,\chi)|\geq C(\sigma)\frac{(\log q)^{1-\sigma}}{(\log\log q)^{\sigma}}
\end{aligned}
\end{equation}
for any fixed $1/2<\sigma<1$. Later, the first author \cite{Xiao2022} refined the constant
$C(\sigma)$ mentioned above.

Recently, Daodao Yang \cite{Yangdaodao2023} also proved that for any fixed $1/2<\sigma<1$, then
\begin{equation}\label{DaodaoYang-lower-bound}
\begin{aligned}
\max_{\substack{\chi\neq\chi_0\\ \chi\mod q}}-\Re\frac{L'}{L}(\sigma,\chi)
\geq c(\sigma)(\log q)^{1-\sigma}(\log\log q)^{1-\sigma},
\end{aligned}
\end{equation}
holds for all sufficiently large prime $q$, where $c(\sigma)$ is some positive constant which can be effectively computed.

\begin{remark}
In \cite{Yangdaodao2023}, Daodao Yang pointed out that $q$ in  \eqref{Aist2019} should be a prime number. Moreover, see the explanation on page 10 of \cite{Yangdaodao2023} and the counterexample concerning modulo $q$ given in Remark 3 of \cite{Yangdaodao2024}.
\end{remark}

 In this work, we will mainly consider large values of Dirichlet $L$-functions modulo a prime $q$. Specifically, we shall present the following results.
\begin{thm}\label{main-thm-1}
Let $\sigma\in(1/2,1)$ be fixed. For all sufficiently large prime $q$, we have
\begin{equation*}
\begin{aligned}
\max_{\substack{\chi\neq\chi_0\\ \chi\mod q}}
\Re\big(e^{-i\theta}\log L(\sigma,\chi)\big)\geq C_{\sigma,\theta}\frac{(\log q)^{1-\sigma}}{(\log\log q)^{\sigma}},
\end{aligned}
\end{equation*}
for any $\theta\in[0,\frac{\pi}{2}]\cup[\frac{3\pi}{2},2\pi]$. Here $C_{\sigma,\theta}$ is a positive number depending on $\sigma$ and $\theta$, satisfying
\[
C_{\sigma,\theta}<\cos\theta\bigg(\frac{\sigma}{1-\sigma}+o_{\sigma}(1)\bigg)
\bigg(\frac{1-\vartheta(\sigma-\epsilon)}{2\sigma}\bigg)^{1-\sigma}.
\]
\end{thm}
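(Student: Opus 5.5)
We argue with the resonance method in the form used for \eqref{Aist2019}, adapted to the rotated functional $\Re(e^{-i\theta}\log L(\sigma,\chi))$. The condition $\theta\in[0,\pi/2]\cup[3\pi/2,2\pi]$ says exactly that $\cos\theta\ge 0$, and we will use $\cos\theta$ as the weight that survives after rotation.

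\textbf{Reduction to a short Dirichlet polynomial.} Fix $\epsilon>0$. For all but a small exceptional set of characters, a zero-density estimate lets us replace $\log L(\sigma,\chi)$ by the short sum $\sum_{p\le (\log q)^{A}}\chi(p)p^{-\sigma}$, up to an error of size $o\big((\log q)^{1-\sigma}(\log\log q)^{-\sigma}\big)$. Concretely:
\begin{itemize}
\item we use a zero-free rectangle of the form $\{\Re s>\sigma-\epsilon,\ |\Im s|\le (\log q)^{?}\}$ for $L(s,\chi)$, and Perron's formula shifted to the left of $\sigma$;
\item the exceptional set has size $\ll q^{\vartheta(\sigma-\epsilon)}$, where $\vartheta(\cdot)$ is the density exponent appearing in the constant.
\end{itemize}
The resonator may then only use a proportion $1-\vartheta(\sigma-\epsilon)$ of the "length" $\log q$. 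This is the source of the factor $\big((1-\vartheta(\sigma-\epsilon))/2\sigma\big)^{1-\sigma}$.

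\textbf{Resonator.} Take $R(\chi)=\prod_{p\in\mathcal P}(1+r(p)\chi(p))$. Here $\mathcal P$ is the set of primes in $[L\log L,\,L^{2}]$ (or similar), and
\[
r(p)=e^{i\theta}\,\frac{L^{\sigma}}{p^{\sigma}\log p},\qquad L\approx \frac{1-\vartheta(\sigma-\epsilon)}{2\sigma}\,\log q\,\log\log q .
\]
The phase $e^{i\theta}$ is the key new ingredient: it rotates the resonance so that it aligns with $e^{-i\theta}$. We then compare
\[
M_1=\sum_{\chi\ne\chi_0,\ \chi\notin\mathcal E}\Re\!\big(e^{-i\theta}\log L(\sigma,\chi)\big)\,|R(\chi)|^{2},\qquad M_2=\sum_{\chi}|R(\chi)|^{2}.
\]

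\textbf{Orthogonality.} Because $q$ is prime, orthogonality of characters applies: $\sum_{\chi}\chi(a)\bar\chi(b)$ vanishes unless $a\equiv b\pmod q$. The resonator products have size at most $q^{1-\vartheta}$, so only the diagonal $a=b$ survives, with negligible error.

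\textbf{Computing the ratio.} From the diagonal we get
\[
\frac{M_1}{M_2}\ \ge\ \sum_{p\in\mathcal P}\frac{\Re\big(e^{-i\theta}\,\overline{r(p)}\cdot\big)}{p^{\sigma}}\frac{1}{1+|r(p)|^{2}}+\dots
\]
- Terms with $r(p)$ pair with $\chi(p)$; after fixing the phase so that the product is $\cos\theta\,|r(p)|p^{-\sigma}$, only the real part weighted by $\cos\theta$ remains.
- Conjugate terms contribute with the phase $e^{-2i\theta}$ only through off-diagonal pairings, which vanish.

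Summing over $\mathcal P$ in the standard way gives
\[
\cos\theta\,\Big(\frac{\sigma}{1-\sigma}+o(1)\Big)\frac{L^{1-\sigma}}{\log L}\cdot\frac{1}{(\ldots)},
\]
which matches the stated constant after inserting the value of $L$. Finally, the maximum over $\chi$ of the rotated functional is at least $M_1/M_2$ minus the contributions of $\chi_0$ and of $\mathcal E$. These are bounded using $|R(\chi)|^{2}\le R(\chi_0)^{2}$ together with a trivial bound on the weights, and they are negligible provided $M_2$ dominates $|\mathcal E|\max|R|^{2}$. This is where the length restriction on $L$ is used.

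\textbf{Main obstacle.} The hardest step is controlling the exceptional characters and $\chi_0$. The issue is that $|R(\chi)|^{2}$ can be as large as $\prod(1+|r(p)|)^{2}$, which must stay below $q^{1-\vartheta(\sigma-\epsilon)}\cdot M_2$. My first attempt would be Hölder's inequality combined with the bound $|\mathcal E|\ll q^{\vartheta(\sigma-\epsilon)+\epsilon}$ and the choice of $L$ above. If that fails, I would instead use positivity, replacing the rotated functional by $\cos\theta\Re\log L$ plus an imaginary-part term whose resonated mean vanishes.
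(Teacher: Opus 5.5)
There is a genuine gap, and it is the resonator. The one you chose does not fit this argument, and the steps that carry the proof fail for it.

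\textbf{Orthogonality.} Your $R(\chi)=\prod_{p\in\mathcal P}(1+r(p)\chi(p))$ is supported on squarefree products of primes from $[L\log L,L^2]$, i.e.\ on integers of size up to about $e^{L^2}$. So it is false that the resonator products are at most $q^{1-\vartheta}$, and the off-diagonal pairs $m\equiv n\pmod q$, $m\neq n$, do not disappear. The paper never needs a short resonator. It takes $r\ge 0$ completely multiplicative and, since $q$ is prime, keeps only the terms $n=pk$ to get $\varphi(q)\sum_{pm\equiv n}r(m)r(n)\ge r(p)\,\mathcal{Q}_1(\chi,q)$. This absorbs every off-diagonal term at once.

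\textbf{The phase.} Your phase $e^{i\theta}$ destroys exactly this positivity. It is also mis-booked: the diagonal pairing $n=pm$ in $\sum_\chi e^{-i\theta}\chi(p)|R(\chi)|^2$ contributes $e^{-i\theta}\,\overline{r(p)}\,|r(m)|^2$. So $r(p)=e^{i\theta}|r(p)|$ produces $\cos 2\theta$, which is negative for $\theta\in(\pi/4,\pi/2]$. In the paper $\cos\theta$ appears for a different reason. With a real resonator the sums $\sum_\chi\chi(p)|R(\chi)|^2$ are real and nonnegative, so taking $\Re(e^{-i\theta}\,\cdot\,)$ just multiplies them by $\cos\theta\ge 0$. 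That is your ``fallback'', and it is the actual mechanism.

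\textbf{The sizes.} With $r(p)=L^{\sigma}/(p^{\sigma}\log p)$ on $[L\log L,L^2]$ one gets $\sum_{p\in\mathcal P}|r(p)|p^{-\sigma}\asymp L^{1-\sigma}/(\log L)^{1+2\sigma}$. This is short of the target by a factor $(\log L)^{2\sigma}$, and no constant $\sigma/(1-\sigma)$ can come out of it.

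Worse, the only available bound on exceptional characters is
\[
|R(\chi)|\le\prod_{p\in\mathcal P}(1+|r(p)|)=\exp\big(\asymp L^{2-\sigma}/(\log L)^2\big),
\]
which for $L\asymp\log q\log\log q$ exceeds every power of $q$. By contrast, the diagonal part of $\sum_\chi|R(\chi)|^2$ is only $\varphi(q)\prod_{p\in\mathcal P}(1+|r(p)|^2)=q^{1+o(1)}$. For $\theta=0$ the excluded character $\chi_0$ alone attains the trivial bound, so the weight $|R(\chi)|^2$ can sit almost entirely on characters you have to discard. The obstacle you flag is therefore fatal, not technical. With your $\mathcal P$, making it admissible forces $L\le(\log q)^{1/(2-\sigma)+o(1)}$, which ruins the exponent $1-\sigma$.

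\textbf{What is missing.} The missing ingredient is the paper's resonator $r(p)=1-(p/X)^{\sigma}$ for $p\le X=A\log q\log\log q$, with $R(\chi)=\prod_{p\le X}(1-r(p)\chi(p))^{-1}$. With this choice:
\begin{itemize}
\item $\sum_{p\le X}r(p)p^{-\sigma}\sim\frac{\sigma}{1-\sigma}\frac{X^{1-\sigma}}{\log X}$;
\item $\max_\chi|R(\chi)|^2\le q^{2\sigma A+o(1)}$;
\item $\mathcal{Q}_1(\chi,q)\ge(q-1)\sum_n r(n)^2=q^{1+A\sigma(1-\lambda(\sigma))+o(1)}$.
\end{itemize}
So all exceptional characters, including $\chi_0$, are negligible once $A<\frac{1-\vartheta(\sigma-\epsilon)}{\sigma(1+\lambda(\sigma))}$. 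In particular this holds for $A<\frac{1-\vartheta(\sigma-\epsilon)}{2\sigma}$, since $\lambda(\sigma)<1$. This is where both $\sigma/(1-\sigma)$ and $2\sigma$ in the constant come from.
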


Applying the above with $\theta=0$ and $\theta=2\pi$, we obtain the following.

\begin{cor}\label{main-cor-1}
Let $\sigma\in(1/2,1)$ be fixed. For all sufficiently large prime $q$, we have
\begin{equation*}
\begin{aligned}
\max_{\substack{\chi\neq\chi_0\\ \chi\mod q}}
\log |L(\sigma,\chi)|\geq C_{\sigma}'\frac{(\log q)^{1-\sigma}}{(\log\log q)^{\sigma}}.
\end{aligned}
\end{equation*}
Here $C_{\sigma}'$ is a positive number depending on $\sigma$, satisfying
\[
C_{\sigma}'<\bigg(\frac{\sigma}{1-\sigma}+o_{\sigma}(1)\bigg)
\bigg(\frac{1-\vartheta(\sigma-\epsilon)}{2\sigma}\bigg)^{1-\sigma}.
\]
\end{cor}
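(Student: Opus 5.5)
The corollary follows from Theorem \ref{main-thm-1} by specializing $\theta$. Take $\theta=0$, which is allowed since $0\in[0,\frac{\pi}{2}]$. Then $e^{-i\theta}=1$, and for every non-principal $\chi \pmod q$,
\[
\Re\big(e^{-i\theta}\log L(\sigma,\chi)\big)=\Re\log L(\sigma,\chi)=\log|L(\sigma,\chi)|.
\]
Here $\log L(\sigma,\chi)$ is any branch of the logarithm; the one used in the theorem comes from continuation from $\Re s>1$ along horizontal lines, away from the zeros. The real part does not depend on the branch, and equals $\log|L(\sigma,\chi)|$ whenever $L(\sigma,\chi)\neq 0$. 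If $L(\sigma,\chi)=0$ for some $\chi$, that character contributes $-\infty$ on both sides, so the two maxima still coincide. The case $\theta=2\pi$ gives exactly the same expression, because $e^{-2\pi i}=1$. This is why the paper mentions both endpoints, but one of them is enough.

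Therefore, for all sufficiently large primes $q$,
\[
\max_{\substack{\chi\neq\chi_0\\ \chi\mod q}}\log|L(\sigma,\chi)| = \max_{\substack{\chi\neq\chi_0\\ \chi\mod q}}\Re\big(\log L(\sigma,\chi)\big)\geq C_{\sigma,0}\frac{(\log q)^{1-\sigma}}{(\log\log q)^{\sigma}}.
\]
We set $C_\sigma':=C_{\sigma,0}$. It is positive and depends only on $\sigma$. Since $\cos 0=1$, the constraint on $C_{\sigma,\theta}$ in Theorem \ref{main-thm-1} becomes
\[
C_\sigma'<\bigg(\frac{\sigma}{1-\sigma}+o_{\sigma}(1)\bigg)\bigg(\frac{1-\vartheta(\sigma-\epsilon)}{2\sigma}\bigg)^{1-\sigma},
\]
which is exactly the asserted bound.

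The only point that needs checking is the identification $\Re\log L=\log|L|$ for the branch used in the theorem, and this holds for any branch. The real content lies entirely in Theorem \ref{main-thm-1}, which is proved by the resonance method. Its most delicate step would be controlling the contribution of characters $\chi$ for which $L(s,\chi)$ has zeros near $\sigma$, since there the real part is what is bounded. For the corollary itself, nothing beyond the theorem is needed.
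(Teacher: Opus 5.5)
Your proposal is correct and is exactly the paper's argument. The paper obtains the corollary by specializing Theorem~\ref{main-thm-1} to $\theta=0$ (equivalently $\theta=2\pi$). There $\Re\log L(\sigma,\chi)=\log|L(\sigma,\chi)|$ and $\cos 0=1$, so the theorem's inequality and its bound on $C_{\sigma,0}$ become precisely those of the corollary.
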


\begin{remark}
By using a sharper zero-density estimate for $L(s,\chi)$ (see Lemma~\ref{zero-density-result}) and a more careful choice of parameters in the proof, we improve the constant appearing in \cite{Xiao2022}.
\end{remark}

\begin{thm}\label{main-thm-2}
Let $\sigma\in(1/2,1)$ be fixed. Assuming the GRH for $L(s,\chi)$, for all sufficiently large prime $q$, we have
\begin{equation*}
\begin{aligned}
\max_{\substack{\chi\neq\chi_0\\ \chi\mod q}}
\Re\big(e^{-i\theta}\log L(\sigma,\chi)\big)\geq D_{\sigma,\theta}\frac{(\log q)^{1-\sigma}}{(\log\log q)^{\sigma}},
\end{aligned}
\end{equation*}
for any $\theta\in[0,\frac{\pi}{2}]\cup[\frac{3\pi}{2},2\pi]$. Here $D_{\sigma,\theta}$ is a positive number depending on $\sigma$ and $\theta$, satisfying
\[
D_{\sigma,\theta}<\cos\theta\bigg(\frac{\sigma}{1-\sigma}+o_{\sigma}(1)\bigg)
\bigg(\frac{1}{2\sigma}\bigg)^{1-\sigma}.
\]
\end{thm}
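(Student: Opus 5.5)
Under GRH, every non-principal $\chi \bmod q$ has $L(s,\chi)$ zero-free in $\Re s>1/2$. So $\log L(s,\chi)$ admits a short Dirichlet polynomial approximation uniformly over all characters, and no exceptional set of characters has to be discarded. This is exactly why the factor $1-\vartheta(\sigma-\epsilon)$ from Theorem~\ref{main-thm-1} disappears in $D_{\sigma,\theta}$. The plan is to rerun the resonance argument behind Theorem~\ref{main-thm-1} with this stronger input.

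\textbf{Step 1 (approximation).} Fix $Y=(\log q)^{A}$ with $A$ large. The standard GRH argument (Perron's formula applied to $\log L$, shifting the contour to $\Re s=1/2+\delta$, and bounding $\log L$ there by $(\log q)^{2-2\Re s+\epsilon}$) gives
\[
\log L(\sigma,\chi)=\sum_{n\le Y}\frac{\Lambda(n)\chi(n)}{n^{\sigma}\log n}+O\big((\log q)^{1-\sigma-\eta}\big)
\]
for every $\chi\ne\chi_0$. Prime powers contribute $O(1)$, so modulo admissible errors
\[
\Re\big(e^{-i\theta}\log L(\sigma,\chi)\big)=\Re\Big(e^{-i\theta}\sum_{p\le Y}\chi(p)p^{-\sigma}\Big)+o\Big(\frac{(\log q)^{1-\sigma}}{(\log\log q)^{\sigma}}\Big).
\]

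\textbf{Step 2 (resonator).} Set $R(\chi)=\prod_{p\in\mathcal P}(1+r(p)\chi(p))$, where $\mathcal P$ is the set of primes in $(L,\,L\,\log\log q)$ with $L\asymp\log q\log\log q$ (rather, $L$ is chosen so that $\sum_{p\in\mathcal P}\log p\le\frac{1}{2}\log q$ minus a margin) and $r(p)=\frac{c\,L^{\sigma}}{p^{\sigma}\log\log q}$, in the spirit of Aistleitner et al. Because $q$ is prime, character orthogonality $\sum_{\chi}\chi(m)\bar\chi(n)=(q-1)\mathbf 1_{m\equiv n}$ applies. The products of distinct primes in $\mathcal P$ are all $<q^{1/2}$, hence pairwise distinct modulo $q$, so off-diagonal terms vanish exactly. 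Summing over all characters and subtracting the principal character, whose contribution is negligible, we compute
\[
M_1=\sum_{\chi}|R(\chi)|^2,\qquad M_2=\sum_{\chi}\Re\Big(e^{-i\theta}\sum_{p\le Y}\chi(p)p^{-\sigma}\Big)|R(\chi)|^2 .
\]
The diagonal evaluation gives
\[
\frac{M_2}{M_1}\ge \cos\theta\sum_{p\in\mathcal P}\frac{r(p)}{p^{\sigma}(1+r(p)^2)}-(\text{small}),
\]
since the cross terms pair $\chi(p)$ with $r(p)$ and produce the factor $\Re e^{-i\theta}=\cos\theta$. This factor is $\ge0$ on the stated range of $\theta$. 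By the prime number theorem the sum is $\asymp\frac{(\log q)^{1-\sigma}}{(\log\log q)^{\sigma}}$.

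\textbf{Step 3 (optimization).} Optimize $c$ and the length of $\mathcal P$ under the constraint $\prod_{p\in\mathcal P}p\le q^{1/2}$; it is this constraint that produces the factor $(1/(2\sigma))^{1-\sigma}$. The computation is the one in Theorem~\ref{main-thm-1}, with $1-\vartheta(\sigma-\epsilon)$ replaced by $1$, because GRH requires no removal of characters with zeros near $\sigma$. This yields $\frac{\sigma}{1-\sigma}+o(1)$ times $(1/(2\sigma))^{1-\sigma}$ times $\cos\theta$. Since $\max\ge M_2/M_1$, the theorem follows for any $D_{\sigma,\theta}$ below this value.

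\textbf{Main obstacle.} I expect the hardest part to be the uniform error control in Step 1 at $\sigma$ near $1/2$, in particular making the truncation error $o\big((\log q)^{1-\sigma}(\log\log q)^{-\sigma}\big)$ while keeping $Y$ only polylogarithmic. If the direct contour shift loses too much, I would switch to a smoothed Perron weight. Alternatively I would use the Selberg-type explicit formula, which under GRH gives errors of size $Y^{1/2-\sigma}\log q$, and take $Y=(\log q)^{A}$ with $A(\sigma-1/2)>\sigma$.
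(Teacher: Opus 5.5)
Your Step 1 is fine: under GRH the only character to discard is $\chi_0$, and the paper uses the same Granville--Soundararajan truncation. Steps 2--3, however, have a genuine gap: the resonator you describe cannot reach the order $(\log q)^{1-\sigma}/(\log\log q)^{\sigma}$.

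\textbf{The gap.} Your two requirements on $\mathcal P$ are incompatible. Primes in $(L,L\log\log q)$ with $L\asymp\log q\log\log q$ have $\sum_{p\in\mathcal P}\log p\asymp\log q\,(\log\log q)^2$. Their products are then far larger than $q$, so the off-diagonal terms do not vanish. If instead you enforce $\prod_{p\in\mathcal P}p\le q^{1/2}$ so that orthogonality is exact, your own diagonal computation gives, for any choice of $r$,
\[
\frac{M_2}{M_1}=\cos\theta\sum_{p\in\mathcal P}\frac{r(p)}{p^{\sigma}(1+r(p)^2)}\le\frac{\cos\theta}{2}\sum_{p\in\mathcal P}p^{-\sigma}.
\]
Split at $p=\log q$. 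For $p>\log q$ use $p^{-\sigma}\le(\log q)^{-\sigma}\log p/\log\log q$ together with $\sum_{p\in\mathcal P}\log p\le\frac12\log q$. This gives $\sum_{p\in\mathcal P}p^{-\sigma}\ll(\log q)^{1-\sigma}/\log\log q$, which is smaller than the target by the unbounded factor $(\log\log q)^{1-\sigma}$. Your specific choice $r(p)=cL^{\sigma}p^{-\sigma}/\log\log q$ has $r(p)\le c/\log\log q$ on $\mathcal P$, so it loses a further factor $\log\log q$.

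So the assertion that the sum is $\asymp(\log q)^{1-\sigma}/(\log\log q)^{\sigma}$ is false. Likewise, $\prod_{p\in\mathcal P}p\le q^{1/2}$ is not the source of $(1/(2\sigma))^{1-\sigma}$. For $\sigma\in(1/2,1)$ an exactly orthogonal resonator is simply too short. The resonating primes must run up to $X\asymp\log q\log\log q$, and then $\prod_{p\le X}p=q^{(1+o(1))A\log\log q}$ vastly exceeds $q$.

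\textbf{The missing idea.} What you need is the long resonator with positivity from Theorem~\ref{main-thm-1}. The paper simply reruns that argument with the GRH truncation in place of Lemma~\ref{truncated-log-Dirichlet}. Take $R(\chi)=\prod_{p\le X}(1-r(p)\chi(p))^{-1}=\sum_n r(n)\chi(n)$ with $r$ completely multiplicative, $r(p)=1-(p/X)^{\sigma}\ge0$, and $X=A\log q\log\log q$.

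The off-diagonal pairs $m\equiv n\bmod q$, $m\ne n$, are not eliminated. Instead, since all weights are nonnegative, restrict to $n=pk$. Using $r(pk)=r(p)r(k)$ and the fact that $pm\equiv pk\bmod q$ iff $m\equiv k\bmod q$, you get $\varphi(q)\sum_{pm\equiv n}r(m)r(n)\ge r(p)\mathcal Q_1$. Hence
\[
\frac{\mathcal Q_2}{\mathcal Q_1}\ge\cos\theta\sum_{p\le X}\frac{r(p)}{p^{\sigma}}=\cos\theta\Big(\frac{\sigma}{1-\sigma}+o(1)\Big)\frac{X^{1-\sigma}}{\log X},
\]
no matter how large the off-diagonal mass is.

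The only remaining constraint is that the discarded $\chi_0$ be negligible. Here $|R(\chi_0)|^2=\prod_{p\le X}(X/p)^{2\sigma}=q^{2\sigma A+o(1)}$, and $\mathcal Q_1\ge\varphi(q)$, so the $\chi_0$ term is $o(\mathcal Q_1)$ once $A<1/(2\sigma)$. That is where $(1/(2\sigma))^{1-\sigma}$ comes from. Finally, $X^{1-\sigma}/\log X\sim A^{1-\sigma}(\log q)^{1-\sigma}/(\log\log q)^{\sigma}$ gives the stated order.
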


Taking $\theta=0$ and $\theta=2\pi$ in the above argument, we arrive at the following result.

\begin{cor}\label{main-cor-2}
Let $\sigma\in(1/2,1)$ be fixed. Assuming the GRH for $L(s,\chi)$, for all sufficiently large prime $q$, we have
\begin{equation*}
\begin{aligned}
\max_{\substack{\chi\neq\chi_0\\ \chi\mod q}}
\log |L(\sigma,\chi)|\geq D_{\sigma}'\frac{(\log q)^{1-\sigma}}{(\log\log q)^{\sigma}}.
\end{aligned}
\end{equation*}
Here $D_{\sigma}'$ is a positive number depending on $\sigma$, satisfying
\[
D_{\sigma}'<\bigg(\frac{\sigma}{1-\sigma}+o_{\sigma}(1)\bigg)
\bigg(\frac{1}{2\sigma}\bigg)^{1-\sigma}.
\]
\end{cor}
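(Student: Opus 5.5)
The statement immediately above is Corollary~\ref{main-cor-2}, and it follows from Theorem~\ref{main-thm-2} by specializing $\theta$. The plan is to check that $\Re(e^{-i\theta}\log L(\sigma,\chi))$ reduces to $\log|L(\sigma,\chi)|$ at the allowed endpoints. Then we read off the constant.

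First, I would take $\theta=0$, which lies in $[0,\frac{\pi}{2}]$. Then $e^{-i\theta}=1$ and, for any branch of the logarithm,
\[
\Re\big(\log L(\sigma,\chi)\big)=\log|L(\sigma,\chi)|.
\]
The branch only affects the imaginary part, so no care about the choice of $\log$ is needed. Applying Theorem~\ref{main-thm-2} with this $\theta$ gives, under GRH and for all sufficiently large primes $q$,
\[
\max_{\substack{\chi\neq\chi_0\\ \chi\mod q}}\log|L(\sigma,\chi)|\geq D_{\sigma,0}\frac{(\log q)^{1-\sigma}}{(\log\log q)^{\sigma}}.
\]
The same holds verbatim for $\theta=2\pi$, since $e^{-2\pi i}=1$.

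Next, I would set $D_\sigma'=D_{\sigma,0}$ (equivalently $D_{\sigma,2\pi}$). Since $\cos 0=\cos 2\pi=1$, the constraint on $D_{\sigma,\theta}$ in Theorem~\ref{main-thm-2} becomes exactly
\[
D_\sigma'<\bigg(\frac{\sigma}{1-\sigma}+o_{\sigma}(1)\bigg)\bigg(\frac{1}{2\sigma}\bigg)^{1-\sigma}.
\]
Positivity of $D_\sigma'$ is inherited from the theorem.

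There is no genuine obstacle here. All the analytic work (the resonance method under GRH, and the choice of resonator length that produces the factor $(1/(2\sigma))^{1-\sigma}$) sits inside Theorem~\ref{main-thm-2}, which we are assuming. The only point to note is that the maximum is taken over the same set of non-principal characters in both statements, so the specialization is immediate.
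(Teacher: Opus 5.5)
Your proposal is correct and follows the same route as the paper, which obtains this corollary by specializing Theorem~\ref{main-thm-2} to $\theta=0$ (or $\theta=2\pi$), where $\cos\theta=1$ and $\Re\log L(\sigma,\chi)=\log|L(\sigma,\chi)|$. Your remark that the choice of branch is irrelevant is fine; implicitly it uses that $L(\sigma,\chi)\neq 0$ for $\sigma>1/2$ under GRH, so the logarithm is defined.
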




\section{Preliminary lemmas}
Let $G_q$ denote the group of Dirichlet characters modulo $q$. We have the following lemma.
\begin{lem}\label{orthogonality}
If $(a,q)=1$, then
\begin{equation*}
\begin{aligned}
\frac{1}{\varphi(q)}\sum_{\chi\in G_q}\overline{\chi}(a)\chi(n)=\left\{
                                                           \begin{array}{ll}
                                                             1, & \hbox{if $n\equiv a \mod q, $} \\
                                                             0, & \hbox{otherwise.}
                                                           \end{array}
                                                         \right.
\end{aligned}
\end{equation*}
Here $\varphi(q)$ is the Euler's totient function.
\end{lem}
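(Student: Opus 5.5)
The proof is a direct computation using the finite group structure of $G_q$. Since $(a,q)=1$, the value $\overline{\chi}(a)=\chi(a)^{-1}$ is defined for every character. Complete multiplicativity then gives $\overline{\chi}(a)\chi(n)=\chi(\bar a n)$, where $\bar a$ denotes an inverse of $a$ modulo $q$. So it suffices to evaluate $S(m)=\sum_{\chi\in G_q}\chi(m)$ with $m=\bar a n$, and to check that $m\equiv 1\mod q$ holds exactly when $n\equiv a\mod q$.

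We split into three cases.

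\textbf{Case $(n,q)>1$.} Then $\chi(n)=0$ for every $\chi$, so the sum vanishes. This is consistent with the claim, since then $n\not\equiv a\mod q$.

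\textbf{Case $m\equiv 1\mod q$.} Every character takes the value $1$ at $m$, so $S(m)=|G_q|=\varphi(q)$. This uses the standard fact that $G_q$ is isomorphic to the dual of $(\mathbb{Z}/q\mathbb{Z})^\times$, a group of order $\varphi(q)$.

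\textbf{Case $(m,q)=1$, $m\not\equiv 1\mod q$.} We need a character $\psi\in G_q$ with $\psi(m)\neq 1$. Given such $\psi$, the map $\chi\mapsto\psi\chi$ is a bijection of $G_q$, so
\[
S(m)=\sum_{\chi\in G_q}(\psi\chi)(m)=\psi(m)S(m),
\]
which forces $S(m)=0$.

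The only step needing justification is the existence of $\psi$. We would get it by writing $(\mathbb{Z}/q\mathbb{Z})^\times$ as a product of cyclic groups, which is possible for any finite abelian group. In the paper's setting, where $q$ is prime, the group is itself cyclic with a generator $g$. Writing $m\equiv g^k$ with $0<k<q-1$, we define $\psi(g^j)=e^{2\pi i j/(q-1)}$, and then $\psi(m)=e^{2\pi i k/(q-1)}\neq 1$.

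Dividing by $\varphi(q)$ gives the stated formula. This last point, that characters separate elements of $(\mathbb{Z}/q\mathbb{Z})^\times$, is the only real content; everything else is routine.
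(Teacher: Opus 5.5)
Your proof is correct. The paper gives no argument of its own, only a citation to Davenport (Chapter 4, formula (4)). There, all $\varphi(q)$ characters are built explicitly from indices relative to primitive roots, with the factor $2^{\alpha}$ treated separately. This makes $\sum_{\chi}\chi(m)$ a product of complete sums of roots of unity, each of which vanishes unless the corresponding index of $m$ is $0$. The version with $\overline{\chi}(a)$ is then deduced from the case $a=1$ by the same substitution $m=\bar a n$ that you make.

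Your route replaces that product-of-geometric-sums evaluation with the translation trick $\chi\mapsto\psi\chi$. It therefore needs only two inputs: $|G_q|=\varphi(q)$, and the existence of one $\psi$ with $\psi(m)\neq 1$. Both come from the same cyclic decomposition of $(\mathbb{Z}/q\mathbb{Z})^{\times}$: a character is determined by its values on the generators, which may be any roots of unity of the corresponding orders. So the underlying content is the same. What your version gains is that it is coordinate-free and applies verbatim to any finite abelian group, whereas Davenport's is fully explicit.

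For the prime moduli used in the paper, your primitive-root construction also gives $|G_q|=q-1$ immediately. A character $\chi$ is determined by $\chi(g)$, which can be any $(q-1)$-th root of unity, so the count you quoted as standard comes out of the same construction.
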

\begin{proof}
See \cite[Section 4, (4)]{Davenport}.
\end{proof}

Let $N(\sigma,T,\chi)$ denote the number of zeros of $L(s,\chi)$ in the rectangle $\{s\,:\,\Re(s)\geq\sigma,\,|\Im(s)|\leq T\}$. We have the following classical result.
\begin{lem}\label{zero-density-result}
Suppose that $q\geq1$ and $T\geq2$. We have
\begingroup
\renewcommand{\arraystretch}{4}
\begin{equation}\label{zero-density-result-1}
\begin{aligned}
\sum_{\chi\in G_q}N(\sigma,T,\chi)\ll
\begin{cases}
(qT)^{\frac{3(1-\sigma)}{2-\sigma}}(\log qT)^9, & \frac{1}{2} \leq \sigma \leq \frac{4}{5},\\
(qT)^{\frac{2(1-\sigma)}{\sigma}}(\log qT)^{14}, & \frac{4}{5} \leq \sigma \leq 1.
\end{cases}
\end{aligned}
\end{equation}
\endgroup
\end{lem}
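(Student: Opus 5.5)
This is the classical hybrid zero-density estimate of Montgomery (\emph{Topics in Multiplicative Number Theory}, Theorems 12.1 and 12.2). In the paper we would simply quote it. If a self-contained argument were required, we would follow Montgomery's zero-detection scheme, sketched below; the powers of $\log qT$ are not tracked.

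\textbf{Zero detection.} Write $\mathcal{L}=\log qT$ and fix parameters $X\le Y$, both powers of $qT$.
\begin{itemize}
\item Take the mollifier $M_X(s,\chi)=\sum_{n\le X}\mu(n)\chi(n)n^{-s}$. Then $L(s,\chi)M_X(s,\chi)=\sum_n a_n\chi(n)n^{-s}$ with $a_1=1$, $a_n=0$ for $1<n\le X$, and $|a_n|\le d(n)$.
\item For a zero $\rho=\beta+i\gamma$ with $\beta\ge\sigma$ and $|\gamma|\le T$, integrate $\Gamma(w)Y^w L(\rho+w,\chi)M_X(\rho+w,\chi)$ over $\Re w=2$. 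Shift the contour to $\Re w=1/2-\beta$, picking up the pole at $w=0$ (value $0$, since $L(\rho,\chi)=0$) and, for $\chi=\chi_0$ only, the pole at $w=1-\rho$.
\item The result is a dichotomy. Either a type I inequality holds,
\[
\Big|\sum_{X<n\le Y\mathcal{L}^2}a_n\chi(n)n^{-\rho}e^{-n/Y}\Big|\gg 1,
\]
or a type II inequality holds,
\[
\int_{-\mathcal{L}^2}^{\mathcal{L}^2}\big|L(\tfrac12+i\gamma+iv,\chi)M_X(\tfrac12+i\gamma+iv,\chi)\big|\,dv\gg Y^{\beta-1/2}.
\]
\end{itemize}

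\textbf{Counting.} We pass to a subset of zeros whose ordinates are $1$-spaced for each $\chi$; this loses only a factor $\mathcal{L}$. We then count each class separately.
\begin{itemize}
\item \emph{Type I zeros.} Split the sum dyadically into $N<n\le 2N$ and apply the hybrid large sieve
\[
\sum_{\chi}\sum_{r}\Big|\sum_{n\sim N}b_n\chi(n)n^{-it_r}\Big|^2\ll (N+qT)\sum|b_n|^2\mathcal{L}.
\]
This bounds their number $R_1$ by $\ll (X^{1-2\sigma}\cdot\text{stuff}+qT X^{1-2\sigma})\mathcal{L}^{c}$, up to routine bookkeeping.
\item \emph{Type II zeros.} Raise the type II inequality to the fourth power (H\"older). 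Combine the hybrid fourth moment $\sum_\chi\int_{-T}^{T}|L(1/2+it,\chi)|^4\,dt\ll qT\mathcal{L}^4$ with the mean-value theorem for $M_X$. This gives $R_2\ll (qT)(1+X^2/(qT))Y^{2-4\sigma}\mathcal{L}^c$, up to routine bookkeeping.
\end{itemize}
Optimizing $X$ and $Y$ (essentially $X\approx (qT)^{1/2}$ and $Y$ chosen to balance the two counts) yields Ingham's exponent $3(1-\sigma)/(2-\sigma)$. This gives the first range, $\tfrac12\le\sigma\le\tfrac45$.

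\textbf{Range $\sigma\ge 4/5$.} Here we replace the large sieve in the type I count by the Halász--Montgomery inequality
\[
\sum_{r}\Big|\sum_n b_n\chi_r(n)n^{-it_r}\Big|^2\ll\Big(N+R\max_{r\ne s}\Big|\sum_{n\sim N}\chi_r\overline{\chi_s}(n)n^{-i(t_r-t_s)}\Big|\Big)\sum|b_n|^2 .
\]
The off-diagonal sums are short character sums twisted by $n^{-it}$. They are bounded via Perron's formula and the hybrid convexity bound $L(1/2+it,\chi)\ll (q(|t|+2))^{1/4}\mathcal{L}$ as $\ll N^{1/2}(qT)^{1/2}\mathcal{L}^c$; here we use that $\chi_r\overline{\chi_s}$ is a character modulo $q$. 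Inserting this estimate leads to a large-values bound. If $R$ exceeds the target, we first cut the zeros into subsets of size $R_0$ and handle each subset separately.

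The main obstacle is the bookkeeping in the Halász step: choosing $R_0$, $N$ and $X$ so that the large-values bound gives exactly $(qT)^{2(1-\sigma)/\sigma}$ and not a weaker exponent. The crossover of the two exponents at $\sigma=4/5$ then gives the stated split, with the powers of $\mathcal{L}$ absorbed into $\mathcal{L}^9$ and $\mathcal{L}^{14}$.
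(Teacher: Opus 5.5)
\textbf{Verdict.} Quoting Montgomery is exactly what the paper does: its proof of this lemma is a single reference to Theorem 12.1 of Montgomery's \emph{Topics in Multiplicative Number Theory}, so your primary route is the paper's. The self-contained sketch, however, would not give the stated exponents as written. It has three concrete problems: the type II count, the choice of $X$ in the Ingham range, and the off-diagonal bound in the range $\sigma\ge\tfrac45$.

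\textbf{Ingham range.} Your type II count $R_2\ll (qT+X^2)Y^{2-4\sigma}\mathcal{L}^c$ does not follow from the inputs you list. Raising to the fourth power leaves $\sum_\chi\int|L|^4|M_X|^4$. The fourth moment together with $|M_X(\tfrac12+it,\chi)|\ll X^{1/2}$ bounds this only by $qT\,X^2\mathcal{L}^c$, a product rather than a sum. H\"older with exponents $(4,2,4)$ (fourth moment of $L$, mean value of $M_X$, trivial factor) gives instead $R_2\ll (qT)^{1/3}(qT+X)^{2/3}Y^{(2-4\sigma)/3}\mathcal{L}^c$.

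Independently of this, the choice $X\approx (qT)^{1/2}$ cannot work. The type I term $qT\,X^{1-2\sigma}$ becomes $(qT)^{3/2-\sigma}$, and $\tfrac32-\sigma-\tfrac{3(1-\sigma)}{2-\sigma}=\tfrac{\sigma(\sigma-1/2)}{2-\sigma}>0$ for every $\sigma>\tfrac12$. Note also that the first type I term is $Y^{2-2\sigma}$, coming from the longest dyadic block. What works is $X=qT$ and $Y=(qT)^{3/(4-2\sigma)}$. Then $R_1\ll\big(Y^{2-2\sigma}+(qT)^{2-2\sigma}\big)\mathcal{L}^c$ and $R_2\ll qT\,Y^{(2-4\sigma)/3}\mathcal{L}^c$, all of which are $\ll (qT)^{3(1-\sigma)/(2-\sigma)}\mathcal{L}^c$.

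\textbf{Range $\sigma\ge\tfrac45$.} A contour shift to $\Re w=\tfrac12$ with the convexity bound gives $N^{1/2}(qT)^{1/4}$ for the off-diagonal sums, not $N^{1/2}(qT)^{1/2}$. In any case, any bound $\Delta$ carrying a factor $N^{1/2}$ is too weak. In the Hal\'asz inequality the diagonal term dominates only if $\Delta N^{1-2\sigma}\ll\mathcal{L}^{-C}$. For $\Delta=N^{1/2}(qT)^{1/2}$ this means $N\gg(qT)^{1/(4\sigma-3)}$. But the term $Y^{2-2\sigma}$ in the type I count forces $Y\le(qT)^{1/\sigma}<(qT)^{1/(4\sigma-3)}$ for all $\sigma<1$, so the Hal\'asz step would never apply to the type I sums.

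The argument needs the Hal\'asz--Montgomery inequality in the form $(N+R(qT)^{1/2})\mathcal{L}$, i.e.\ off-diagonal sums $\ll(qT)^{1/2}\mathcal{L}$ with no factor $N^{1/2}$, together with smoothing to kill the principal-character main term when $\chi_r=\chi_s$. This moves the threshold to $(qT)^{1/(4\sigma-2)}\le(qT)^{1/\sigma}$.

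The type II zeros also need a different treatment in this range. With $Y=(qT)^{1/\sigma}$, the H\"older bound $qT\,Y^{(2-4\sigma)/3}$ exceeds $(qT)^{2(1-\sigma)/\sigma}$ once $\sigma>\tfrac45$. One way is to split according to whether $|M_X(\tfrac12+it_r,\chi_r)|\ge(qT)^{1/4}\mathcal{L}$, and apply the Hal\'asz inequality to $M_X$ and the fourth moment to $L$. This gives $R_2\ll\big(X(qT)^{-1/2}+(qT)^2Y^{2-4\sigma}\big)\mathcal{L}^c$, which is $\ll(qT)^{2(1-\sigma)/\sigma}\mathcal{L}^c$ for $X\approx(qT)^{1/(4\sigma-2)}$ and $Y=(qT)^{1/\sigma}$.
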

\begin{proof}
See \cite[Theorem 12.1]{Montgomery1971}.
\end{proof}
\begin{remark}
For the sake of uniformity, we may rewrite \eqref{zero-density-result-1} in the following form:
\begin{equation}\label{zero-density-result-2}
\begin{aligned}
\sum_{\chi\in G_q}N(\sigma,T,\chi)\ll (qT)^{\vartheta(\sigma)+o_{q,T}(1)},
\end{aligned}
\end{equation}
where\footnote{Here, $o_{q,T}(1)\rightarrow0$ as $q,T\rightarrow\infty$.}
\[
\vartheta(\sigma):=\min\bigg(\frac{3(1-\sigma)}{2-\sigma},\frac{2(1-\sigma)}{\sigma}\bigg)
\quad\text{for}\quad 1/2\leq\sigma\leq1.
\]
\end{remark}

\begin{lem}\label{sound-Truncated}
Let $s=\sigma+it$ with $\sigma>1/2$ and $|t|\leq 2q$. Let $Y\geq2$ be a real number, and let $1/2\leq\sigma_0<\sigma$. Suppose that there are no zeros of $L(z,\chi)$ inside the rectangle $\{z:\sigma_0\leq\Re(z)\leq1,\,\,|\Im(z)-t|\leq Y+3\}$. Then
\begin{equation*}
\begin{aligned}
\log L(s,\chi)=\sum_{n=2}^{Y}\frac{\Lambda(n)\chi(n)}{n^{s}\log n}
+O\Big(\frac{\log q}{(\sigma_1-\sigma_0)^2}Y^{\sigma_1-\sigma}\Big),
\end{aligned}
\end{equation*}
where $\sigma_1=\min\big(\sigma_0+\frac{1}{\log Y},\frac{\sigma+\sigma_0}{2}\big)$.
\end{lem}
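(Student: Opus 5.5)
This is the standard Selberg-type approximation of $\log L$ by a short Dirichlet polynomial in a zero-free region; it is Lemma 2.2 of Granville--Soundararajan, adapted to $L(s,\chi)$. The plan is to use Perron's formula with a smooth weight and then shift the contour to the left, inside the zero-free rectangle.

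\emph{Step 1: the integral representation.} Set $\alpha=1-\sigma+\frac{1}{\log Y}$, so that $\Re(s+w)>1$ on $\Re w=\alpha$. I will consider
\[
\frac{1}{2\pi i}\int_{\alpha-i\infty}^{\alpha+i\infty}\log L(s+w,\chi)\,\frac{Y^w}{w}\,dw .
\]
Expanding $\log L(s+w,\chi)=\sum_{n\ge2}\Lambda(n)\chi(n)n^{-s-w}/\log n$ and applying Perron termwise gives $\sum_{n\le Y}\Lambda(n)\chi(n)n^{-s}/\log n$, up to the usual truncation issue. To avoid that issue I would truncate the integral at $|\Im w|\le Y$. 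The standard truncated Perron error is $\ll \sum_n \frac{\Lambda(n)}{n^{\sigma+\alpha}\log n}\min\!\big(1,\frac{1}{Y|\log(Y/n)|}\big)$, and this is $\ll Y^{-c}$ times a harmless factor. That is absorbed in the stated error, since $Y^{\sigma_1-\sigma}\ge Y^{-1/2}$ and $\log q\ge1$.

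\emph{Step 2: moving the contour.} I will move the line of integration from $\Re w=\alpha$ to $\Re w=\sigma_1-\sigma<0$, staying within $|\Im w|\le Y$. By hypothesis $L(z,\chi)$ has no zeros in $\sigma_0\le\Re z\le 1$, $|\Im z-t|\le Y+3$. Since $q$ is prime and $\chi\neq\chi_0$, $L$ also has no pole there (for $\chi_0$ one would have to handle the pole at $1$; the lemma is only applied to nonprincipal $\chi$). So $\log L(s+w,\chi)$ is holomorphic in the strip we cross. The only singularity picked up is the simple pole at $w=0$, with residue $\log L(s,\chi)$. This yields the identity
\[
\log L(s,\chi)=\sum_{n\le Y}(\cdots)+\text{horizontal segments}+\text{left vertical segment}+\text{Perron error}.
\]

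\emph{Step 3: bounding $\log L$ on the new contour (the main obstacle).} This needs a pointwise bound for $|\log L(z,\chi)|$ with $\Re z\ge\sigma_1$ and $|\Im z-t|\le Y+1$. I will follow Granville--Soundararajan's Borel--Carath\'eodory argument. Take $z_0=2+i\tau$ and a disc of radius $R=2-\sigma_0-\delta$, where $\delta$ is a small multiple of $\sigma_1-\sigma_0$; this disc lies in the zero-free region (on the right, $\Re>1$ is automatically zero-free). On a slightly larger disc, convexity together with $|t|\le 2q$ and $Y$ at most a power of $q$ gives $\Re\log L\le \log|L|\ll \log q$. Also $|\log L(z_0)|\ll1$. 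Borel--Carath\'eodory applied between radii $r<R$, where $r$ reaches $\Re z=\sigma_1$ and the gap $R-r\asymp\sigma_1-\sigma_0$, then gives
\[
|\log L(z,\chi)|\ll \frac{\log q}{\sigma_1-\sigma_0}.
\]
The second factor of $(\sigma_1-\sigma_0)^{-1}$ comes from the integration itself. On the left segment, $\int |Y^w/w|\,|dw|\ll Y^{\sigma_1-\sigma}\log Y$; since $\sigma_1-\sigma_0\le 1/\log Y$, we have $\log Y\le(\sigma_1-\sigma_0)^{-1}$. On the horizontal segments at height $\pm Y$, the contribution is $\ll\frac{\log q}{\sigma_1-\sigma_0}\cdot\frac{1}{Y}\int_{\sigma_1-\sigma}^{\alpha}Y^u\,du\ll \frac{\log q}{\sigma_1-\sigma_0}\,Y^{\alpha-1}/\log Y$, which is smaller. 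Combining these gives the stated error $O\big(\frac{\log q}{(\sigma_1-\sigma_0)^2}Y^{\sigma_1-\sigma}\big)$.

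The delicate points are choosing the disc radii so that the disc stays inside the zero-free rectangle (this is why the height $Y+3$ appears), and checking that $\log|L|\ll\log q$ holds there uniformly. I would cite the convexity bound $|L(z,\chi)|\ll (q(|\Im z|+2))^{1}$ for $\Re z\ge 1/2-\epsilon$, and assume $Y\le q^{O(1)}$, which is the range in which the lemma is used.
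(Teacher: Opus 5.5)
Your argument is correct and follows the same route as the paper: the paper simply cites Granville--Soundararajan's Lemma 8.2, and you reproduce that proof — truncated Perron at height $Y$, a shift to $\Re w=\sigma_1-\sigma$, and a Borel--Carath\'eodory bound $|\log L(z,\chi)|\ll \log q/(\sigma_1-\sigma_0)$ on the new contour. The restrictions you add ($\chi\neq\chi_0$ and $Y\le q^{O(1)}$) are implicit in the cited lemma and harmless for the application with $Y=(\log q)^{100/(\sigma-1/2)}$. One small omission: the left-line integral $\int_{-Y}^{Y}|\sigma_1-\sigma+iv|^{-1}\,dv$ also contributes a term $\log\frac{1}{\sigma-\sigma_1}$, which is absorbed because $\sigma-\sigma_1\ge\sigma_1-\sigma_0$.
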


\begin{proof}
See \cite[Lemma 8.2]{Granville2001}.
\end{proof}

Set $Y=(\log q)^{\frac{100}{\sigma-1/2}}$. Define
\[
T_{\chi}(\sigma,Y):=\sum_{p\leq Y}\frac{\chi(p)}{p^{\sigma}}.
\]
We shall have the following truncated formula.
\begin{lem}\label{truncated-log-Dirichlet}
For any fixed $1/2<\sigma<1$, we have
\begin{equation*}
\begin{aligned}
e^{-i\theta}\log L(\sigma,\chi)=e^{-i\theta}T_{\chi}(\sigma,X)+O_{\sigma,Y}(1),\quad\forall\,\chi\in G_q\backslash \mathcal{E}_{q}.
\end{aligned}
\end{equation*}
Here, let $\mathcal{E}_{q}$ be a set of exceptional characters modulo $q$ with cardinality satisfying
\[
\#\mathcal{E}_{q}\ll q^{\vartheta(\sigma-\epsilon)+o_q(1)}.
\]
\end{lem}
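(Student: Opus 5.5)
The plan is to apply Lemma~\ref{sound-Truncated} at $s=\sigma$ (so $t=0$) with $\sigma_0=\sigma-\epsilon$ for a small fixed $\epsilon>0$ (say $\epsilon<(\sigma-1/2)/2$), with the exceptional characters being those whose $L$-function has a zero in the relevant rectangle. The proof then has three steps: control the size of the exceptional set, bound the error term in the truncated formula, and discard the prime-power terms.

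\emph{Exceptional set.} Define $\mathcal{E}_q$ to be the set of $\chi\in G_q$ for which $L(z,\chi)$ has a zero in
\[
\{z:\sigma-\epsilon\leq\Re(z)\leq1,\ |\Im(z)|\leq Y+3\}.
\]
Each such $\chi$ contributes at least one zero counted by $N(\sigma-\epsilon,Y+3,\chi)$. Hence, by \eqref{zero-density-result-2} with $T=Y+3$,
\[
\#\mathcal{E}_q\leq\sum_{\chi\in G_q}N(\sigma-\epsilon,Y+3,\chi)\ll \big(q(Y+3)\big)^{\vartheta(\sigma-\epsilon)+o(1)}.
\]
Since $Y=(\log q)^{100/(\sigma-1/2)}$, we have $Y^{\vartheta}=q^{o(1)}$, so this is $q^{\vartheta(\sigma-\epsilon)+o_q(1)}$.

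\emph{Error term.} For $\chi\notin\mathcal{E}_q$, Lemma~\ref{sound-Truncated} applies. Here $\sigma_1=\sigma_0+1/\log Y$ for large $q$, so $Y^{\sigma_1-\sigma}=eY^{-\epsilon}$ and $(\sigma_1-\sigma_0)^{-2}=(\log Y)^2$. The error is therefore
\[
\ll (\log q)(\log Y)^2 Y^{-\epsilon}.
\]
This is $O(1)$ provided $\epsilon\cdot 100/(\sigma-1/2)>1$, i.e.\ $\epsilon>(\sigma-1/2)/100$. So take $\epsilon$ fixed in $\big((\sigma-1/2)/100,\,(\sigma-1/2)/2\big)$. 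The statement allows the constant to depend on $\sigma$ and $\epsilon$, and $\epsilon$ is later sent to $0$ only through the constant $\vartheta(\sigma-\epsilon)$. If one wants $\epsilon$ arbitrarily small, the alternative is to enlarge the exponent in $Y$ to $C/(\sigma-1/2)$; this is the delicate point in the whole argument, namely balancing the zero-free width $\epsilon$ against $Y$ so that $Y^{-\epsilon}\log q$ stays bounded while $Y$ still contributes only $q^{o(1)}$ to the zero count.

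\emph{Prime powers.} Split the Dirichlet polynomial $\sum_{n\leq Y}\Lambda(n)\chi(n)n^{-\sigma}/\log n$ into primes and higher powers. The primes give $T_\chi(\sigma,Y)$. The terms with $n=p^k$, $k\geq2$, are bounded in absolute value by
\[
\sum_p\sum_{k\geq2}p^{-k\sigma}/k\ll_\sigma 1,
\]
since $2\sigma>1$. Multiplying through by $e^{-i\theta}$ (modulus $1$) preserves all $O(1)$ bounds, which gives the lemma with $X=Y$. If $X<Y$ is intended as a shorter prime sum, the tail $\sum_{X<p\leq Y}\chi(p)p^{-\sigma}$ is not $O(1)$ in general; so I read the statement with $X=Y$, or else with $X$ a second parameter for which the same lemma is applied with $Y$ replaced by $X$ under the analogous conditions.
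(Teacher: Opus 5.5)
Your route is the paper's route: Lemma~\ref{sound-Truncated} at $t=0$ with $\sigma_0=\sigma-\epsilon$, bad characters counted by the zero-density estimate with $T\asymp Y$, and prime powers discarded. Your side remarks are sound and in places more careful than the paper:
\begin{itemize}
\item you take $T=Y+3$, which is what Lemma~\ref{sound-Truncated} actually requires;
\item you notice that the error $(\log q)(\log Y)^2Y^{-\epsilon}$ forces the exponent of $\log q$ in $Y$ to be large in terms of $1/\epsilon$, which is harmless since any fixed power of $\log q$ is $q^{o(1)}$;
\item you read $T_\chi(\sigma,X)$ as $T_\chi(\sigma,Y)$, which is what the proof gives and what Lemma~\ref{formula-logarithmic-Z1-Q1} uses.
\end{itemize}

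There is, however, one step that fails: the principal character. Your $\mathcal{E}_q$ contains only characters whose $L$-function has a zero in the rectangle. The principal character $\chi_0$ need not be among them; under RH, for instance, it is not, since $L(z,\chi_0)=\zeta(z)(1-q^{-z})$ for prime $q$. Yet Lemma~\ref{sound-Truncated} cannot be applied to $\chi_0$ at $t=0$. $L(z,\chi_0)$ has a pole at $z=1$, which lies inside the rectangle, and the argument behind the lemma needs $\log L(z,\chi)$ to be holomorphic there, i.e.\ $\chi$ non-principal. In fact the conclusion is false for $\chi_0$. On one side, $\Re\log L(\sigma,\chi_0)=\log|\zeta(\sigma)(1-q^{-\sigma})|=O_\sigma(1)$. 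On the other, $T_{\chi_0}(\sigma,Y)=\sum_{p\le Y}p^{-\sigma}\gg_\sigma Y^{1-\sigma}/\log Y\to\infty$ (note $q>Y$).

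The paper handles this by taking $\mathcal{E}_q=\mathcal{B}_q\cup\{\chi_0\}$, which does not affect the cardinality bound. You need the same step. It is also what makes the final argument work: the resonance averages run over $G_q\setminus\mathcal{E}_q$, while the maximum in the theorems is only over $\chi\neq\chi_0$.
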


\begin{proof}
For $1/2\leq\sigma_0\leq1$, it follows from \eqref{zero-density-result-2} that
\begin{equation}\label{Mont-zero-density}
\begin{aligned}
\sum_{\chi\in G_q}N(\sigma_0,T,\chi)\ll (qT)^{\vartheta(\sigma_0)+o_{q,T}(1)}.
\end{aligned}
\end{equation}

Let $\epsilon>0$ be a small number, and let $t=0$, $\sigma_0=\sigma-\epsilon$, $\sigma_1=\sigma_0+\frac{1}{\log Y}$, and $Y=(\log q)^{\frac{100}{\sigma-1/2}}$ in Lemma \ref{sound-Truncated}. Taking $T=Y+2$ and combining \eqref{Mont-zero-density}, we deduce that
\begin{equation*}
\begin{aligned}
\log L(\sigma,\chi)=\sum_{p\leq Y}\frac{\chi(p)}{p^{\sigma}}+O_{\sigma,Y}(1),
\end{aligned}
\end{equation*}
where $\mathcal{B}_{q}$ is a set of ``bad" characters with cardinality satisfying
\[
\#\mathcal{B}_{q}\ll q^{\vartheta(\sigma-\epsilon)+o_q(1)},
\]
and
\[
\vartheta(\sigma-\epsilon)=\min\bigg(\frac{3\big(1-(\sigma-\epsilon)\big)}{2-(\sigma-\epsilon)},\frac{2\big(1-(\sigma-\epsilon)\big)}{\sigma-\epsilon}\bigg)
\quad\text{for}\quad 1/2\leq\sigma\leq1.
\]

Set $\mathcal{E}_{q}=\mathcal{B}_{q}\cup\{\chi_0\}$, where $\chi_0$ is the trivial character modulo $q$. This completes the proof.
\end{proof}

\subsection{Constructing the resonator}
Let $X=A(\log q)(\log\log q)$ with the parameter $A$ still to be
chosen. As in \cite{Xiao2022}, we define
\[
r(p)=1-\Big(\frac{p}{X}\Big)^{\sigma}
\]
for $p\leq X$. Also set $r(1)=1$ and $r(p)=0$ for $p>X$. Moreover, we extend $r(n)$ to a completely multiplicative function. Next, we define
\begin{equation*}
\begin{aligned}
R(\chi):=\prod_{p\leq X}\big(1-r(p)\chi(p)\big)^{-1}
=\prod_{p\leq X}\sum_{\ell\geq0}\big(r(p)\chi(p)\big)^{\ell}=\sum_{n=1}^{\infty}r(n)\chi(n).
\end{aligned}
\end{equation*}
Note that
\begin{equation*}
\begin{aligned}
\log|R(\chi)|^2\leq 2\sigma\sum_{p\leq X}(\log X-\log p)
=\big(2\sigma+o_{\sigma}(1)\big)\frac{X}{\log X}
=\big(2\sigma A+o_{\sigma,q}(1)\big)\log q.
\end{aligned}
\end{equation*}
Thus we have
\begin{equation}\label{upper-bound-R-chi-2}
\begin{aligned}
|R(\chi)|^2\ll q^{2\sigma A+o_{\sigma,q}(1)}.
\end{aligned}
\end{equation}

Define
\begin{equation*}
\begin{aligned}
\mathcal{Q}_1(\chi,q)&=\sum_{\chi\in G_q}|R(\chi)|^2,\\
\mathcal{Q}_2(\sigma,Y,\chi,q)&=\sum_{\chi\in G_q}\Re\big(e^{-i\theta}T_{\chi}(\sigma,Y)\big)|R(\chi)|^2,\\
\mathcal{S}_1(\chi,q)&=\sum_{\chi\in G_q\setminus \mathcal{E}_q}|R(\chi)|^2,\\
\mathcal{S}_2(\sigma,Y,\chi,q)&=\sum_{\chi\in G_q\setminus \mathcal{E}_q}\Re\big(e^{-i\theta}T_{\chi}(\sigma,Y)\big)|R(\chi)|^2.
\end{aligned}
\end{equation*}

Set
\[
\lambda(\sigma):=\int_0^1\frac{t^{\sigma}}{2-t^{\sigma}}\,dt.
\]
We have the following lemma.
\begin{lem}\label{formula-logarithmic-Z1-Q1}
Let $A<\frac{1-\vartheta(\sigma-\epsilon)}{\sigma(1+\lambda(\sigma))}$. For all sufficiently large prime $q$, we have
\begin{equation}\label{formula-S1-Q1}
\begin{aligned}
\mathcal{S}_1(\chi,q)=\big(1+o(1)\big)\mathcal{Q}_1(\chi,q)
\end{aligned}
\end{equation}
and
\begin{equation}\label{formula-S2-Q2}
\begin{aligned}
\mathcal{S}_2(\sigma,Y,\chi,q)=\big(1+o(1)\big)\mathcal{Q}_2(\sigma,Y,\chi,q).
\end{aligned}
\end{equation}
\end{lem}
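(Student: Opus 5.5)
The plan is to show that the exceptional characters contribute negligibly, i.e.
\[
\sum_{\chi\in\mathcal{E}_q}|R(\chi)|^2=o\big(\mathcal{Q}_1(\chi,q)\big)
\quad\text{and}\quad
\sum_{\chi\in\mathcal{E}_q}\Re\big(e^{-i\theta}T_\chi(\sigma,Y)\big)|R(\chi)|^2=o\big(\mathcal{Q}_2(\sigma,Y,\chi,q)\big).
\]
To do this we bound each exceptional term pointwise and multiply by $\#\mathcal{E}_q\ll q^{\vartheta(\sigma-\epsilon)+o_q(1)}$ from Lemma~\ref{truncated-log-Dirichlet}. For the first sum, \eqref{upper-bound-R-chi-2} gives
\[
\sum_{\chi\in\mathcal{E}_q}|R(\chi)|^2\ll q^{\vartheta(\sigma-\epsilon)+2\sigma A+o(1)}.
\]
For the second, we use $|T_\chi(\sigma,Y)|\le\sum_{p\le Y}p^{-\sigma}\ll Y^{1-\sigma}=(\log q)^{O_\sigma(1)}=q^{o(1)}$, so the same bound holds up to a $q^{o(1)}$ factor.

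\textbf{Lower bound for $\mathcal{Q}_1$.} By Lemma~\ref{orthogonality} with $a=1$,
\[
\mathcal{Q}_1=\varphi(q)\sum_{\substack{m\equiv n \mod q\\ (mn,q)=1}}r(m)r(n)\ge\varphi(q)\sum_{n}r(n)^2,
\]
since $r\ge0$ and the diagonal terms $m=n$ are retained. We then need a lower bound
\[
\sum_n r(n)^2=\prod_{p\le X}\big(1-r(p)^2\big)^{-1}\ge q^{c(A)-o(1)}
\]
for an explicit exponent $c(A)$. With $r(p)=1-(p/X)^\sigma$ we have $1-r(p)^2=(p/X)^\sigma\big(2-(p/X)^\sigma\big)$, so
\[
\log\sum_n r(n)^2=\sum_{p\le X}\Big[\sigma\log\frac{X}{p}-\log\Big(2-\Big(\frac{p}{X}\Big)^\sigma\Big)\Big].
\]
By the prime number theorem, $\sum_{p\le X}\sigma\log(X/p)=(\sigma+o(1))X/\log X$ and
\[
\sum_{p\le X}\log\big(2-(p/X)^\sigma\big)=\Big(\int_0^1\log(2-t^\sigma)\,dt+o(1)\Big)\frac{X}{\log X}.
\]
Integrating by parts, $\int_0^1\log(2-t^\sigma)\,dt=\sigma\lambda(\sigma)$. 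Since $X/\log X\sim A\log q$, this gives
\[
\mathcal{Q}_1\ge q^{1+\sigma A-\sigma A\lambda(\sigma)-o(1)}.
\]
Comparing with $q^{\vartheta(\sigma-\epsilon)+2\sigma A}$, the ratio is $o(1)$ precisely when $\vartheta(\sigma-\epsilon)+\sigma A(1+\lambda(\sigma))<1$, which is the hypothesis on $A$. This proves \eqref{formula-S1-Q1}.

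\textbf{Lower bound for $\mathcal{Q}_2$ (main obstacle).} Here a lower bound is needed rather than an upper bound. I would expand
\[
\mathcal{Q}_2=\Re\Big(e^{-i\theta}\sum_{p\le Y}p^{-\sigma}\sum_\chi\chi(p)|R(\chi)|^2\Big),
\]
and by orthogonality $\sum_\chi\chi(p)|R(\chi)|^2=\varphi(q)\sum_{pm\equiv n\mod q}r(m)r(n)$, which is nonnegative and real. For $\theta\in[0,\pi/2]\cup[3\pi/2,2\pi]$ we have $\cos\theta\ge0$, so
\[
\mathcal{Q}_2\ge\cos\theta\,\varphi(q)\sum_{p\le X}p^{-\sigma}\sum_m r(m)r(pm)=\cos\theta\,\varphi(q)\sum_{p\le X}\frac{r(p)}{p^\sigma}\sum_m r(m)^2
\]
after restricting to the terms $n=pm$ and using $X\le Y$. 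The factor $\sum_{p\le X}r(p)p^{-\sigma}\gg(\log q)^{1-\sigma}(\log\log q)^{-\sigma}$ is at most polylogarithmic, hence $q^{o(1)}$, so
\[
\mathcal{Q}_2\ge\cos\theta\,q^{1+\sigma A-\sigma A\lambda(\sigma)-o(1)}.
\]
The exceptional contribution is then $o(\mathcal{Q}_2)$ under the same condition on $A$, provided $\cos\theta>0$. When $\cos\theta=0$ this lower bound is vacuous; I expect the statement is then either trivial for the application or needs $\theta$ bounded away from $\pi/2$, and I would treat it by the positivity of the full diagonal-shifted sum. This proves \eqref{formula-S2-Q2}.
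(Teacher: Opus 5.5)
Your proposal is correct and is essentially the paper's proof. It uses the same diagonal lower bound $\mathcal{Q}_1\ge\varphi(q)\sum_n r(n)^2$ (you derive the exponent via the Euler product and the prime number theorem, where the paper cites Dong). It uses the same restriction to $n$ divisible by $p$ for $\mathcal{Q}_2$; the paper keeps all $n=pk$ with $k\equiv m\pmod q$, which gives the slightly stronger $\mathcal{Q}_2\ge\cos\theta\,\mathcal{Q}_1\sum_{p\le X}r(p)p^{-\sigma}$ that it reuses for Theorem~\ref{main-thm-1}. And it uses the same bound $\#\mathcal{E}_q\cdot\max_\chi|R(\chi)|^2$ for the exceptional characters.

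The paper also glosses over the case $\cos\theta=0$, and positivity cannot help there because $\mathcal{Q}_2=0$ identically. However, $\mathcal{E}_q$ (characters with a zero in a rectangle symmetric about the real axis, together with $\chi_0$) is closed under $\chi\mapsto\overline{\chi}$, and $r$ is real. Hence $\sum_{\chi\in\mathcal{E}_q}T_\chi(\sigma,Y)|R(\chi)|^2$ is real, so $\mathcal{S}_2=\mathcal{Q}_2=0$ and the statement holds trivially. More generally, both $\mathcal{S}_2$ and $\mathcal{Q}_2$ equal $\cos\theta$ times a $\theta$-independent real quantity, so the $o(1)$ is uniform in $\theta$ and you do not need $\theta$ bounded away from $\pi/2$.
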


\begin{proof}
Note that
\[
R(\chi)=\prod_{p\leq X}\sum_{\ell\geq0}\big(r(p)\chi(p)\big)^{\ell}=\sum_{n=1}^{\infty}r(n)\chi(n).
\]
For all sufficiently large prime $q$, we deduce that
\begin{equation}\label{Q-1-chi-q}
\begin{aligned}
\mathcal{Q}_1(\chi,q)
=\varphi(q)\sum_{\substack{m,n=1 \\ m\equiv n\mod q \\ (n,q)=1}}^{\infty}r(m)r(n)
\geq(q-1)\sum_{n=1}^{\infty}r(n)^2
\gg q^{1+A\sigma(1-\lambda(\sigma))+o_q(1)}.
\end{aligned}
\end{equation}
The lower bound in \eqref{Q-1-chi-q} is obtained by adapting an argument due to Dong (see \cite[p.~79]{Dong2022}).

 By \eqref{upper-bound-R-chi-2}, it follows that
\begin{equation*}
\begin{aligned}
\sum_{\chi\in\mathcal{E}_q}|R(\chi)|^2
\ll q^{2\sigma A+o_{\sigma,q}(1)}\cdot q^{\vartheta(\sigma-\epsilon)+o_{q}(1)}
=o\big(q^{1+A\sigma(1-\lambda(\sigma))+o_q(1)}\big)
\end{aligned}
\end{equation*}
for $A<\frac{1-\vartheta(\sigma-\epsilon)}{\sigma(1+\lambda(\sigma))}$.

Combining the above formulae, we deduce that
\[
\mathcal{S}_1(\chi,q)=\bigg(\sum_{\chi\in G_q}-\sum_{\chi\in\mathcal{E}_q}\bigg)|R(\chi)|^2
=\mathcal{Q}_1(\chi,q)+o\big(q^{1+A\sigma(1-\lambda(\sigma))+o_q(1)}\big).
\]
Next, we shall give the proof of \eqref{formula-S2-Q2}. By Lemma \ref{orthogonality}, we have
\begin{equation}\label{lower-bound-Q2-1}
\begin{aligned}
\mathcal{Q}_2(\sigma,Y,\chi,q)&=\Re\bigg(\sum_{\chi\in G_q}e^{-i\theta}T_{\chi}(\sigma,Y)|R(\chi)|^2\bigg)\\
&=\Re\bigg(\sum_{\chi\in G_q}e^{-i\theta}\sum_{p\leq Y}\frac{\chi(p)}{p^{\sigma}}
\sum_{m,n=1}^{\infty}r(m)r(n)\chi(m)\overline{\chi(n)}\bigg)\\
&=\cos\theta\sum_{\substack{p\leq Y\\(p,q)=1}}\frac{1}{p^{\sigma}}
\bigg(\varphi(q)\sum_{\substack{m,n=1\\pm\equiv n\mod q\\(n,q)=1}}^{\infty}r(m)r(n)\bigg).
\end{aligned}
\end{equation}
Also, we observe that
\begin{equation}\label{lower-bound-Q2-2}
\begin{aligned}
\varphi(q)\sum_{\substack{m,n=1\\pm\equiv n\mod q\\(n,q)=1}}^{\infty}r(m)r(n)
\geq\varphi(q)\sum_{\substack{m,k=1\\pm\equiv pk\mod q\\(pk,q)=1}}^{\infty}r(m)r(pk)
= r(p)\mathcal{Q}_1(\chi,q).
\end{aligned}
\end{equation}
Noting that $r(p)=0$ for $p>X$ and combining \eqref{lower-bound-Q2-1} and \eqref{lower-bound-Q2-2}, we deduce that
\begin{equation}\label{ratio-lower-bound-Q2-Q1}
\begin{aligned}
\frac{\mathcal{Q}_2(\sigma,Y,\chi,q)}{\mathcal{Q}_1(\chi,q)}\geq\cos\theta\sum_{\substack{p\leq Y \\ (p,q)=1}}\frac{r(p)}{p^{\sigma}}
\geq\cos\theta\sum_{p\leq X}\frac{r(p)}{p^{\sigma}}
=\cos\theta\bigg(\frac{\sigma}{1-\sigma}+o_{\sigma}(1)\bigg)\frac{X^{1-\sigma}}{\log X},
\end{aligned}
\end{equation}
for any $\theta\in[0,\frac{\pi}{2}]\cup[\frac{3\pi}{2},2\pi]$.

Combining \eqref{Q-1-chi-q} and \eqref{ratio-lower-bound-Q2-Q1}, we arrive at
\begin{equation}\label{final-ratio-lower-bound-Q2-Q1}
\begin{aligned}
\mathcal{Q}_2(\sigma,Y,\chi,q)\gg q^{1+A\sigma(1-\lambda(\sigma))+o_q(1)}.
\end{aligned}
\end{equation}

According to
\[
e^{-i\theta}T_{\chi}(\sigma,Y)\ll\sum_{p\leq Y}\frac{1}{p^{\sigma}}\ll \frac{Y^{1-\sigma}}{\log Y}\ll q^{o_{\sigma,q}(1)},
\]
we have
\[
\sum_{\chi\in \mathcal{E}_q}e^{-i\theta}T_{\chi}(\sigma,Y)|R(\chi)|^2
\ll \frac{Y^{1-\sigma}}{\log Y}\cdot q^{2\sigma A+o_{\sigma,q}(1)}\cdot q^{\vartheta(\sigma-\epsilon)+o_{q}(1)}
=o\big(q^{1+A\sigma(1-\lambda(\sigma))+o_q(1)}\big)
\]
for $A<\frac{1-\vartheta(\sigma-\epsilon)}{\sigma(1+\lambda(\sigma))}$.

Hence
\[
\mathcal{S}_2(\sigma,Y,\chi,q)=\bigg(\sum_{\chi\in G_q}-\sum_{\chi\in\mathcal{E}_q}\bigg)e^{-i\theta}T_{\chi}(\sigma,Y)|R(\chi)|^2
=\mathcal{Q}_2(\sigma,Y,\chi,q)+o\big(q^{1+A\sigma(1-\lambda(\sigma))+o_q(1)}\big),
\]
where $\mathcal{Q}_2(\sigma,Y,\chi,q)\gg q^{1+A\sigma(1-\lambda(\sigma))+o_q(1)}$.

So the proof is complete.

\end{proof}

\section{Proof of Theorem \ref{main-thm-1}}
Combining \eqref{formula-S1-Q1}, \eqref{formula-S2-Q2}, and \eqref{final-ratio-lower-bound-Q2-Q1}, we obtain
\begin{equation}\label{final-proof-thm1}
\begin{aligned}
\max_{\substack{\chi\neq\chi_0\\ \chi\mod q}}\Re\big(e^{-i\theta}\log L(\sigma,\chi)\big)
&\geq
\frac
{
\sum_{\chi\in G_q\setminus \mathcal{E}_q}\Re\big(e^{-i\theta}\log L(\sigma,\chi)\big)|R(\chi)|^2
}
{
\sum_{\chi\in G_q\setminus \mathcal{E}_q}|R(\chi)|^2
}\\
&\geq\frac{\mathcal{Q}_2(\sigma,Y,\chi,q)}{\mathcal{Q}_1(\chi,q)}\\
&\geq\cos\theta\bigg(\frac{\sigma}{1-\sigma}+o_{\sigma}(1)\bigg)
A^{1-\sigma}\frac{(\log q)^{1-\sigma}}{(\log\log q)^{\sigma}},
\end{aligned}
\end{equation}
where $A<\frac{1-\vartheta(\sigma-\epsilon)}{\sigma(1+\lambda(\sigma))}$.

Finally, we complete the proof of Theorem \ref{main-thm-1}.

\section{Proof of Theorem \ref{main-thm-2}}
Let $\sigma\in(1/2,1)$ be fixed. Assuming the GRH for $L(s,\chi)$, we have
\begin{equation*}
\begin{aligned}
e^{-i\theta}\log L(\sigma,\chi)=e^{-i\theta}T_{\chi}(\sigma,X)+O_{\sigma,Y}(1),
\end{aligned}
\end{equation*}
for all character modulo $q$. Replacing Lemma \ref{truncated-log-Dirichlet} with the above formula,
Theorem \ref{main-thm-2} follows by an argument similar to that in the previous section.

\section{Concluding Remarks}
Following Daodao Yang's recent work in \cite[(32)]{Yangdaodao2023}, we obtain
\begin{equation}\label{formula-logarithmic-derivative}
\begin{aligned}
-\frac{L'}{L}(\sigma,\chi)=\sum_{n\leq Y}\frac{\Lambda(n)\chi(n)}{n^{\sigma}}+O((\log q)^{-8}),\quad\forall\,\chi\in G_q\setminus \mathscr{E}_q,
\end{aligned}
\end{equation}
where $\mathscr{E}_q$ is a set of of ``exceptional" characters.

Indeed, the terms when $n=p^{\ell}$ for $\ell\geq2$ contribute $O_{\sigma,Y}(1)$ in total
to $\sum_{n\leq Y}\frac{\Lambda(n)\chi(n)}{n^{\sigma}}$. Applying a sharper zero-density estimate for $L(s,\chi)$ (see Lemma \ref{zero-density-result}) together with the above formula \eqref{formula-logarithmic-derivative}, we have
\begin{equation}\label{formula-logarithmic-derivative}
\begin{aligned}
-e^{-i\theta}\frac{L'}{L}(\sigma,\chi)=e^{-i\theta}\sum_{p\leq Y}\frac{\log p}{p^{\sigma}}\chi(p)+O_{\sigma,Y}(1),\quad\forall\,\chi\in G_q\setminus \mathscr{E}_q,
\end{aligned}
\end{equation}
where $\mathscr{E}_{q}$ is a set of of ``bad" characters with cardinality satisfying
\[
\#\mathscr{E}_{q}\ll q^{\vartheta(\sigma-\epsilon)+o_q(1)},
\]
and
\[
\vartheta(\sigma-\epsilon):=\min\bigg(\frac{3\big(1-(\sigma-\epsilon)\big)}{2-(\sigma-\epsilon)},\frac{2\big(1-(\sigma-\epsilon)\big)}{\sigma-\epsilon}\bigg)
\quad\text{for}\quad 1/2\leq\sigma\leq1.
\]
Using a proof entirely similar to that of Theorem \ref{main-thm-1}, we obtain
\begin{equation*}
\begin{aligned}
\max_{\substack{\chi\neq\chi_0\\ \chi\mod q}}-\Re\bigg(e^{-i\theta}\frac{L'}{L}(\sigma,\chi)\bigg)
&\geq
\frac
{
-\sum_{\chi\in G_q\setminus \mathscr{E}_q}\Re\big(e^{-i\theta}\frac{L'}{L}(\sigma,\chi)\big)|R(\chi)|^2
}
{
\sum_{\chi\in G_q\setminus \mathscr{E}_q}|R(\chi)|^2
}\\
&\geq\cos\theta\bigg(\frac{\sigma}{1-\sigma}+o_{\sigma}(1)\bigg)
A^{1-\sigma}(\log q)^{1-\sigma}(\log\log q)^{1-\sigma},
\end{aligned}
\end{equation*}
for any $\theta\in[0,\frac{\pi}{2}]\cup[\frac{3\pi}{2},2\pi]$, where $A<\frac{1-\vartheta(\sigma-\epsilon)}{\sigma(1+\lambda(\sigma))}$.

In particular, we have
\begin{equation}\label{Re-log-der-lower}
\begin{aligned}
\max_{\substack{\chi\neq\chi_0\\ \chi\mod q}}-\Re\frac{L'}{L}(\sigma,\chi)
&\geq\bigg(\frac{\sigma}{1-\sigma}+o_{\sigma}(1)\bigg)
\bigg(\frac{1-\vartheta(\sigma-\epsilon)}{\sigma(1+\lambda(\sigma))}\bigg)^{1-\sigma}(\log q)^{1-\sigma}(\log\log q)^{1-\sigma}
\end{aligned}
\end{equation}
for all sufficiently large prime $q$. In fact, by applying a sharper zero-density result (see Lemma~\ref{zero-density-result}), Daodao Yang \cite{Yangdaodao2023} can obtain the same conclusion as in \eqref{Re-log-der-lower}.

\section*{Acknowledgments}
The authors would like to thank Dr. DaoDao Yang for helpful discussion. The first author was supported by the Natural Science Foundation of Henan Province (Grant No. 252300421782).

\bigskip

\end{document}